\newtheorem{theorem}{Theorem}[section]
\newtheorem{definition}[theorem]{Definition}
\newtheorem{lemma}[theorem]{Lemma}
\newenvironment{proof}[1][Proof]{\noindent\textbf{#1.} }
{\hfill \ \rule{0.5em}{0.5em}}
\newcommand{\modn}{(\textup{mod}~n)}
\begin{document}

\title{A note on the number of edges in a Hamiltonian graph with no repeated cycle length}

\author{Joey Lee\thanks{Department of Mathematics and Statistics, California State University Sacramento, \texttt{joeylee@csus.edu}}
\and 
Craig Timmons\thanks{Department of Mathematics and Statistics, California State University Sacramento, \texttt{craig.timmons@csus.edu}.
Research supported in part by Simons Foundation Grant \#359419.}}

\maketitle

\abstract{
Let $G$ be an $n$-vertex graph obtained by adding chords to a cycle of length $n$.  Markstr\"{o}m 
asked for the maximum number of edges in $G$ if there are no two cycles in $G$ with the same length.
A simple counting argument shows that such a graph can have at most $n  + \sqrt{2n} +1 $ edges.
Using difference sets in $\mathbb{Z}_n$, we show that for infinitely many $n$, 
there is an $n$-vertex Hamiltonian graph with $n + \sqrt{n - 3/4} - 3/2$ edges and no repeated cycle length.    
}


\section{Introduction}

Let $G$ be a graph with $n$ vertices.  The \emph{cycle spectrum} of $G$, which we denote by $\mathcal{S}(G)$, 
is the set of all $k \in \{3,4, \dots ,n \}$ for which there is a cycle of length $k$ in $G$.
Some of the most basic structural properties of a graph can be stated in terms of the cycle 
spectrum: $G$ is bipartite if and only if $\mathcal{S}(G)$ contains no odd integer, 
$G$ is a tree if and only if $\mathcal{S}(G) = \emptyset$, $G$ is Hamiltonian if and only if 
$n \in \mathcal{S}(G)$, and finally, $G$ is pancyclic if and only if 
$\mathcal{S}(G)= \{3,4, \dots , n \}$.  
In addition, one may also be interested in the number of cycles in $G$ of a given length.  In 
this case, it is natural to consider the multiset version of $\mathcal{S}(G)$, which we denote 
by $\mathcal{S}^m(G)$.  More precisely, an integer $k$ appears $l$ times in $\mathcal{S}^m (G)$ 
if $G$ has exactly $l$ cycles of length $k$.  The $n$-vertex graph $G$ is \emph{uniquely pancyclic} if 
\[
\mathcal{S}^m (G) = \{3,4, \dots , n \}.
\]
That is, for every $k \in \{3,4, \dots , n \}$, $G$ has exactly one cycle of length $k$.  
The existence of uniquely pancyclic graphs has been studied by Shi \cite{shi} and Markstr\"{o}m \cite{markstrom}.  
It is an open problem of whether or not there exists infinitely many uniquely pancyclic graphs.  In addition to 
existence questions, one can also ask extremal type questions such as how large or small
$\mathcal{S}(G)$ can be given that $G$ has a fixed number of edges (see \cite{bkm} Problem 4.3).  In the other direction, what is the 
maximum number of edges in an $n$-vertex graph with no repeated cycle length?  
According to Bondy and Murty, this question was asked by Erd\H{o}s (see \cite{bm}, Problem 11 on page 247).  
Let us write $f(n)$ for the maximum number of edges in an $n$-vertex graph with no repeated cycle length. 
The best known lower bound on $f(n)$ is due to Lai \cite{lai} who 
proved that 
\begin{equation}\label{lai result}
\liminf_{n \rightarrow \infty} \frac{f(n) - n }{ \sqrt{n} }  \geq 
\sqrt{2.4 }.
\end{equation} 
A result of Boros, Caro, F\"{u}redi, and Yuster \cite{boros} implies that 
\[
f(n) \leq n + (1.98 + o(1)) \sqrt{n}.
\]
Lai has conjectured that $\lim_{n \rightarrow \infty} \frac{ f(n) - n }{ \sqrt{n}  } = \sqrt{2.4}$ and determining
an asymptotic formula for $f(n)$ is an unsolved problem.         

While studying uniquely pancyclic graphs, Markstr\"{o}m \cite{markstrom} proved an upper bound on the number of chords 
in such a graph (since a uniquely pancyclic graph must contain a cycle of length $n$, it may be 
constructed by adding chords to a cycle of length $n$).  
His upper bound is exceeded by Lai's lower bound on $f(n)$.  The reason 
for this is that Lai's construction that proves (\ref{lai result}) has no cycle of length $\Omega (n)$.  This motivated 
Markstr\"{o}m to pose the problem of determining the maximum number of edges in an $n$-vertex Hamiltonian graph 
with no repeated cycle length (see Problem 2.2 in \cite{markstrom}). 
This problem was reiterated in the suvey of Lai and Liu \cite{lai-liu}.  
Let 
\[
g(n)
\]
be the maximum number edges in an $n$-vertex, Hamiltonian graph with no repeated cycle length.  
Using difference sets in the cyclic group $\mathbb{Z}_n$, we prove the following.  

\begin{theorem}\label{thm 1}
If $q$ is a power of a prime and $n = q^2 + q + 1$, then  
\[
g(n) \geq n + \sqrt{n - 3/4} - 3/2.
\]
\end{theorem}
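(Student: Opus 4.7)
The plan is to construct, for every prime power $q$ and $n = q^2 + q + 1$, an $n$-vertex Hamiltonian graph with $n + (q-1) = n + \sqrt{n - 3/4} - 3/2$ edges in which all cycles have distinct lengths. The construction uses a Singer (perfect) difference set $D \subseteq \mathbb{Z}_n$ of size $q + 1$, whose existence follows from $q$ being a prime power; its defining property is that every nonzero element of $\mathbb{Z}_n$ is uniquely expressible as $d - d'$ for an ordered pair $(d, d') \in D \times D$ with $d \neq d'$. After translating so that $0 \in D$, I form the graph $G$ on vertex set $\mathbb{Z}_n$ whose edges consist of the Hamiltonian cycle $C_n = 0 - 1 - \cdots - (n-1) - 0$ together with the $q - 1$ chords $\{0, a\}$ for $a \in A := D \setminus \{0, d_\ast\}$, where $d_\ast \in D \setminus \{0\}$ is an element to be chosen. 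Then $G$ has the desired edge count and is clearly Hamiltonian.

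Because every chord is incident to $0$, each cycle of $G$ uses at most two chords, and a direct enumeration yields the following: the Hamiltonian $C_n$ of length $n$; for each $a \in A$, two cycles of lengths $a + 1$ and $n + 1 - a$ (obtained from the chord $\{0, a\}$ together with each of the two arcs of $C_n$ between $0$ and $a$); and for each pair $a < b$ in $A$, one cycle of length $b - a + 2$ (formed by the two chords $\{0, a\}$ and $\{0, b\}$ together with the unique arc from $a$ to $b$ avoiding $0$). This gives $1 + 2(q-1) + \binom{q-1}{2} = \binom{q+1}{2}$ cycles in total, partitioned into the length sets $L_1 = \{a + 1 : a \in A\}$, $L_2 = \{n + 1 - a : a \in A\}$, $L_3 = \{b - a + 2 : a < b,\ a, b \in A\}$, together with $\{n\}$. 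Distinctness within each $L_k$ is immediate because $A$ inherits the Sidon property of $D$.

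The task thus reduces to making the four sets pairwise disjoint (and to ensuring $1, n - 1 \notin A$ so that the chords are genuine). This is equivalent to three algebraic conditions on $A$: (C1) no $a, a' \in A$ satisfy $a + a' \equiv 0 \pmod n$; (C2) no $a, b, c \in A$ with $b < c$ satisfy $c - b = a - 1$; (C3) no $a, b, c \in A$ with $b < c$ satisfy $c - b \equiv n - 1 - a$. By Singer uniqueness, each potential violation corresponds to a single ordered pair in $D$ realising the relevant difference; if at least one element of that pair is $0$ or $d_\ast$, the would-be violation disappears. Hence the problem reduces to choosing the translate of $D$ and the dropped element $d_\ast$ so that every obstruction is absorbed by $\{0, d_\ast\}$.

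This last step is the technical heart of the proof and the main obstacle. The plan is to leverage the flexibility in the construction: a Singer DS may be translated and acted upon by its multiplier group, producing a rich family of candidates, and each candidate offers $q$ choices of $d_\ast$. Small cases ($q = 3, 4, 5$, giving $n = 13, 21, 31$) can be verified by direct inspection, which both exhibits the construction explicitly and suggests the right pattern. For general $q$, I expect either a uniform construction exploiting the algebraic structure of the projective plane $PG(2, q)$ underlying $D$, or a pigeonhole-style argument comparing the $O(q^2)$ potential obstructions against the larger space of $(D, d_\ast)$ pairs, to produce a valid configuration. Once such a pair is exhibited, the resulting graph realises $n + q - 1$ edges with pairwise distinct cycle lengths, proving $g(n) \geq n + \sqrt{n - 3/4} - 3/2$.
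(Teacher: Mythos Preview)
Your setup is essentially the paper's: Singer difference set, all chords through a single vertex, remove two elements. Where you stall is exactly where the paper has its one idea, and your proposal does not supply it. You translate so that $0\in D$ and then hope to find a suitable $d_\ast$; this makes (C1) automatic but leaves (C2) and (C3) genuinely open, and the talk of multipliers and pigeonhole is not a proof. In fact the constraint $0\in D$ is a misstep: the two elements you should be removing are dictated by the constants in the cycle--length formulas, not by the label of the hub vertex.

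Concretely, in your labelling the three length families are $a+1$, $(n-1)-a+2$, and $b-a+2$, so the relevant constants are $1$ and $n-1$. The paper (with hub vertex $1$ instead of $0$, so the constants become $2$ and $n\equiv 0$) chooses the \emph{unique} translate $B$ of the difference set that contains both of these constants---this exists because $2$ has a unique representation $a_0-b_0$ in the original set, and $B=A-b_0$ then contains $0$ and $2$---and takes $S=B\setminus\{2,0\}$. Now every would--be coincidence $c-2=b-a$, $b-2=n-a$, or $b-a=n-c$ is an equality of two differences of elements of $B$; unique representation forces one of the elements to be $2$ or $0$, which has been removed. All three disjointness conditions fall out in one line each, with no search over $d_\ast$ and no case analysis. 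In your coordinates the same argument reads: translate so that $1,n-1\in D$ (such a translate exists and automatically excludes $0$), and set $A=D\setminus\{1,n-1\}$; then (C1)--(C3) follow immediately from the perfect difference set property.
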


If $G$ is an $n$-vertex graph with no repeated cycle length, then we can obtain an upper bound on the number of 
edges of $G$ as follows.  We view $G$ as obtained from $C_n$ by adding some number of chords, say $k$.  Each 
pair of chords determines at least one cycle and so, since no cycle length is repeated, $\binom{k}{2} < n$.
This gives the upper bound $g(n) < n + \sqrt{2n} + 1$.  This argument, which can be refined 
(see \cite{markstrom}) shows 
that Theorem \ref{thm 1} gives the correct order of magnitude of $g(n)$.  Like in the case with $f(n)$, we suspect that 
determining an asymptotic formula for $g(n)$ could be difficult, but at the same time, it is possible 
that adding the Hamiltonicity constraint makes the problem easier.  This is discussed further in the concluding remarks section.  In 
the next section, we give the proof of Theorem \ref{thm 1}.  


\section{Proof of Theorem \ref{thm 1}}

For $n \geq 3$, we write $C_n$ for the cycle of length $n$.  We will always assume that the vertices of $C_n$ are 
$\{1,2, \dots , n \}$, and the edges are $\{ i , i + 1 \}$ for $1 \leq i \leq n - 1$ together with $\{n , 1 \}$.  
If $C$ is a cycle whose edges are 
\[
\{x_1 , x_2 \}, \{x_2 , x_3 \} , \dots , \{x_{k-1} , x_k \} , \{x_k , x_1 \},
\]
then we write $x_1 , x_2 , x_3 , \dots , x_k , x_1$ for $C$. 

\begin{definition}
Given a positive integer $n \geq 4$ and a set $S \subseteq \{3,4, \dots , n -1 \}$, let $G_n (S)$ be the 
graph obtained by adding the edges $ \{1 , a \}$ to the cycle $C_n$ for each $a \in S$.  
\end{definition}

The graph that we construct will be obtained by choosing $S$ and $n$ carefully.  The next 
definition lists the conditions that we need $S$ to satisfy in order for $G_n (S)$ to have no repeated cycle length.  

\begin{definition}\label{def}
Let $n \geq 4$ and $S \subseteq \{3,4, \dots , n -1 \}$.  We say that $S$ is a \emph{distinct cycle set} 
if the following two conditions hold:
\begin{enumerate}
\item the differences $b-a$ with $b,a \in S$ and $b > a$ are all distinct,
\item the sets $S$, $S_n^{ \star }:= \{ n + 2 - a : a \in S \}$, and $S^{ - }  := \{ b - a + 2 : a,b \in S , b > a \}$,  
 are pairwise disjoint. 
\end{enumerate}
\end{definition}

\begin{lemma}\label{lemma 2}
If $n \geq 4$ and $S \subseteq \{3,4, \dots , n - 1 \}$ is a distinct cycle set, then the graph 
$G_n (S)$ has $n + |S|$ edges and no two cycles in $G_n (S)$ have the same length.
\end{lemma}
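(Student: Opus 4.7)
The plan is to count edges directly and then enumerate all cycles of $G_n(S)$ by how many chords they use. The edge count is immediate: $G_n(S)$ consists of the $n$ edges of $C_n$ together with one chord $\{1,a\}$ for each $a\in S$, giving $n+|S|$ edges in total.

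For the cycle analysis, the key observation is that every chord is incident to the vertex $1$, and removing vertex $1$ from $G_n(S)$ leaves a path; hence every cycle of $G_n(S)$ must pass through vertex $1$ exactly once and so use zero, one, or two chords. I would split into three cases. (i) Zero chords: the only cycle is $C_n$ itself, of length $n$. (ii) One chord $\{1,a\}$: the cycle closes along one of the two arcs of $C_n$ joining $1$ and $a$, of lengths $a-1$ and $n-a+1$; as $a$ ranges over $S$, this contributes the cycle lengths forming the sets $S$ and $S_n^{\star}$. (iii) Two chords $\{1,a\}$ and $\{1,b\}$ with $a<b$: the path joining $a$ and $b$ in the cycle cannot revisit $1$, so it must be the arc $a,a+1,\ldots,b$ of length $b-a$, giving a cycle of length $b-a+2$; over all such pairs this contributes the set $S^{-}$.

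Thus the multiset of cycle lengths of $G_n(S)$ is $\{n\}\cup S\cup S_n^{\star}\cup S^{-}$, and it remains to verify all these values are pairwise distinct. Distinctness within $S^{-}$ is precisely condition (1) of the definition, while $S$ and $S_n^{\star}$ are sets by construction, and condition (2) gives pairwise disjointness of $S$, $S_n^{\star}$, and $S^{-}$. The integer $n$ lies in none of the three: $n\notin S$ since $S\subseteq\{3,\ldots,n-1\}$; $n\in S_n^{\star}$ would force $2\in S$; and $n\in S^{-}$ would force a difference $n-2$, impossible when $3\leq a<b\leq n-1$. The main substantive step is the two-chord case of the enumeration, where the crucial point is that the other arc from $a$ to $b$ passes through vertex $1$ and hence fails to yield a simple cycle.
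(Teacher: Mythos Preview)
Your proposal is correct and follows essentially the same approach as the paper: classify cycles by the number of chords they use (zero, one, or two), identify the resulting length sets as $\{n\}$, $S$, $S_n^{\star}$, and $S^{-}$, and then invoke the two conditions in the definition of a distinct cycle set. If anything, your write-up is slightly more complete, since you explicitly justify why every cycle must pass through vertex~$1$ (via the observation that $G_n(S)-1$ is a path) and explicitly check that the length~$n$ of the Hamiltonian cycle does not coincide with any element of $S$, $S_n^{\star}$, or $S^{-}$.
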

\begin{proof}
It is clear from the definitions that $G_n (S)$ has $n + |S|$ edges.
We must show that no two cycles have the same length.
Observe that all chords in $G_n(S)$ are incident to the vertex 1.  
Thus, any cycle in $G_n(S)$ must contain the vertex 1, and furthermore, 
must pass through exactly zero, one, or two chords.  There is only one cycle that 
contains no chords, namely $1,2,3, \dots , n , 1$.  There are two types of cycles that 
pass through exactly one chord.  Given $a \in S$, the sequence $1,2, \dots , a-1 , a, 1$ forms 
a cycle of length $a$ and we call this a cycle of Type 1.  
Also, the sequence $1, a, a+1 , \dots , n -1 , n , 1$ forms a cycle 
of length $n + 2 - a$ and we call a cycle of this form Type 2.  
The cycles in $G_n(S)$ that pass through exactly two chords, which we call Type 3, are of 
the form $1, a, a+1 , \dots , b -1 , b , 1$ where $a,b$ are elements of $S$ with $b > a$.       
In short, 
\begin{itemize} 
\item Type 1 are cycles of the form $1,2,3 \dots a,1$ where $a \in S$ and have length $a$,
\item Type 2 are cycles of the form $1, a, a+1, \dots ,n-1,n,1$ where $a \in S$ and have length $n  + 2  -a$, and
\item Type 3 are cycles of the form $1,a,a+1, \dots ,b-1,b,1$ where $b > a$ are in $S$ and have length $b - a + 2$.       
\end{itemize}
No two distinct cycles of Type 1 will have the same length.  If a Type 1 has the same length as a Type 2, then 
there are elements $a,b \in S $ with $a = n + 2 - b$.  This implies $ S \cap S_n^{ \star} \neq \emptyset$ which cannot occur 
since $S$ is a distinct cycle set.  Similarly, if a Type 1 has the same length as a Type 3, then 
$S \cap S^{ -} \neq \emptyset$ which cannot occur.  
No two distinct cycles of Type 2 will have the same length.  If a Type 2 has the same length as a Type 3, then 
$S_n^{ \star} \cap S^- \neq \emptyset$.  
Lastly, if two distinct cycles of Type 3 have the same length, then there are elements $a,b,c,d \in S$ with 
$b - a + 2 = d - c + 2$ and $b > a$, $d > c$.   This cannot occur since the 
differences $b-a$ with $b > a$ and $a,b \in S$ are all distinct.  
\end{proof}

\begin{lemma}\label{main lemma}
If $n \geq 4$ and $A \subset \mathbb{Z}_n$ is a perfect difference set, then there is a 
a distinct cycle set $S \subseteq \{3,4, \dots , n -1 \}$ with $|S| \geq |A| -2$.  
\end{lemma}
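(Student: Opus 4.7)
The plan is to produce $S$ as a carefully translated copy of $A$ with two elements removed. First I would exploit the perfect difference property to choose the translate: since $n \geq 4$, the element $2 \in \mathbb{Z}_n$ is nonzero and hence is realized by a (unique) ordered pair as a difference in $A$, so by translating I may assume $\{0, 2\} \subseteq A$. Observe that $1 \notin A$ automatically, for otherwise $\{0,1,2\} \subseteq A$ would realize the difference $1$ by both $(0,1)$ and $(1,2)$.

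Identifying $\mathbb{Z}_n$ with the vertex labels $\{1,\dots,n\}$ via $0 \leftrightarrow n$, I would then set $S := A \setminus \{0,2\}$. By construction $|S| = |A|-2$, and $S \subseteq \{3,4,\dots,n-1\}$ because the only ``bad'' labels $1,2,n$ correspond to $1,2,0 \in \mathbb{Z}_n$, none of which lie in $S$. The first condition of Definition~\ref{def} is then immediate from the perfect difference property: a coincidence of positive integer differences within $S \subseteq A$ would give a coincidence of differences mod $n$ in $A$, which is forbidden.

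The main task is verifying the three disjointnesses $S \cap S_n^{\star} = \emptyset$, $S \cap S^- = \emptyset$, and $S_n^{\star} \cap S^- = \emptyset$. The key observation, and the reason for insisting that both $0$ and $2$ belong to $A$, is that each of these can be rewritten as an equation of $\mathbb{Z}_n$-differences between two ordered pairs in $A \times A$. Perfect difference then forces those pairs to coincide, pinning one variable to $0$ or $2$ and contradicting $S \cap \{0,2\} = \emptyset$. Concretely: $a + a' = n + 2$ becomes $a - 2 \equiv -a' \pmod n$, matching the pair $(2,a)$ with $(a',0)$; the equation $c = b - a + 2$ becomes $c - 2 \equiv b - a \pmod n$, matching $(2,c)$ with $(a,b)$; and $n+2-a = b - c + 2$ becomes $a \equiv c - b \pmod n$, matching $(0,a)$ with $(b,c)$.

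The step I expect to be the main obstacle is precisely this choice of translate. A more obvious choice such as $\{0,1\} \subseteq A$ handles the Type~1 vs Type~2 clash but gives no leverage on the Type~1 vs Type~3 clash, since the relation $c - 2 \equiv b - a$ does not involve the element $1$. Insisting that both $0$ and $2$ lie in $A$ is what lets a single structural argument eliminate all three clashes simultaneously.
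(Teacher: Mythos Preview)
Your proof is correct and follows essentially the same approach as the paper: translate the perfect difference set so that it contains $0$ and $2$, delete those two elements, and then use the uniqueness of difference representations in $A$ to rule out each of the three forbidden intersections. Your argument is in fact slightly more careful than the paper's, since you explicitly check that $1$ cannot lie in the translated set (the paper tacitly relies on this to conclude $S \subseteq \{3,\dots,n-1\}$).
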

\begin{proof}
Let $A \subseteq \mathbb{Z}_n$ be a perfect difference set.  There is a unique ordered pair 
$(a_0   , b_0 ) \in A \times A$ such that $a_0 - b_0 \equiv 2 \modn$.  
Let 
\[
B = \{ a - b_0 \modn : a \in A \}.
\]
Since $B$ is a translate of $A$, $B$ is a perfect difference set in 
$\mathbb{Z}_n$.  
Observe that $B$ contains 
\begin{center}
$a_0 - b_0 \equiv 2 \modn$ and $b_0 - b_0 \equiv n \modn $.
\end{center} 
We may view $B$ as a subset of $\{1,2, \dots , n \}$ and we let $S = B \backslash \{ 2 , n \}$.
The set $S$ has at least $|A| - 2$ elements, and has the property that 
all of the differences $b -a$ with $b,a \in S$ and $b > a$ are distinct. 
To complete the proof, we must show that the sets $S$, $S^-$, and $S_n^{ \star }$ are pairwise disjoint (see Definition \ref{def} 
for the definitions of $S^-$ and $S_n^{ \star}$).
In each of the three cases, we will argue by contradiction.

If $S \cap S^- \neq \emptyset$, then there are elements 
$a,b,c \in S$ with $c = b - a + 2$ where $b > a$.  The equation $c = b - a + 2$ implies 
\[
c - 2 \equiv b - a \modn.
\]
Each of $a,b,c$, and 2 belong to $B$ and since $B$ is a difference set, we must have $c = 2$ or $b = 2$.
However, both $c$ and $b$ belong to $S$ and $2 \notin S$.  We conclude that $S \cap S^- = \emptyset$.

Suppose $S \cap S_n^{ \star} \neq \emptyset$.  There are elements $a,b \in S$ with 
$b = n + 2 - a$.  This implies $b \equiv n + 2 - a \modn$ so that $b -2 \equiv n - a \modn$.  
Now $b,2, n$, and $a$ are all elements of $B$ so that $b =2$ or $a=2$.  Again, this is a contradiction 
since $a,b \in S$ but $2 \notin S$.

Lastly, suppose that $S^- \cap S_n^{ \star} \neq \emptyset$.  There are elements 
$a,b,c \in S$ with 
$b - a + 2  = n + 2 - c$.  We can cancel 2 and then take the resulting equation modulo $n$ to get 
\[
b - a \equiv n - c \modn .
\]
The elements $b,a,n$, and $c$ all belong to $B$.  We must have $b = n$ or $n = c$ but $n \notin S$ so this 
cannot occur.

The preceding three paragraphs show that the sets $S$, $S^-$, and $S_n^{ \star}$ are all pairwise disjoint.
Therefore, $S$ is a distinct cycle set.  
\end{proof}

\bigskip

\begin{proof}[Proof of Theorem \ref{thm 1}]
Whenever $q$ is a power of a prime, there is a perfect difference set 
$A_q \subseteq \mathbb{Z}_{q^2 + q + 1}$ with $q + 1$ elements.  
This classical result is due to Singer \cite{singer}.  
By Lemma \ref{main lemma}, there is a distinct cycle set $S_q \subseteq \{3,4, \dots , q^2  + q \}$ with 
$|S_q| = q - 1$.  By Lemma \ref{lemma 2}, the graph 
$G_{q^2 + q + 1}(S_q)$ has $q^2 + 2q$ edges and no repeated cycle length.  Therefore, 
\[
g(q^2 + q + 1 ) \geq q^2 + 2q
\]
whenever $q$ is a power of a prime.  
\end{proof}


\section{Concluding Remarks}

Theorem \ref{thm 1} implies that 
\begin{equation}\label{eq 1.1}
\limsup_{n \rightarrow \infty } \frac{ g(n) - n }{ \sqrt{n } } \geq 1.
\end{equation}
Our approach, that of taking $C_n$ and adding chords incident to a single vertex, will not lead to improvements upon (\ref{eq 1.1}).  
This is because for any $S \subseteq \{3,4, \dots , n \}$, if $G_n (S)$ has no repeated cycle lengths, then 
$S$ forms a Sidon set in $\{1,2 \dots ,n \}$.  This is a set with the property that all sums of pairs of elements are distinct.  
A famous result of Erd\H{o}s and Tur\'{a}n \cite{et} says that a Sidon set in $\{1,2, \dots , n \}$ has at most $(1 + o (1)) \sqrt{n}$ elements 
and thus, $|S| \leq (1 + o(1)) \sqrt{n}$.

We also have $g(n) \leq n + \sqrt{2 n } + 1$.  
This upper bound comes from counting pairs of chords (see the introduction for 
more details) that have been added to $C_n$ to obtain $G$.  
We call of pair of chords $e$ and $e'$ \emph{crossing} if $C_n$ together with $e$ and $e'$ is a subdivision 
of $K_4$.  Otherwise, call $e$ and $e'$ \emph{non-crossing}.  A pair of non-crossing chords can be used to create 
just one cycle.  In a graph with no repeated cycle lengths, the crossing chords create two cycles 
with different lengths.  
If $k$ is the number of chords and $c$ is the number of pairs of chords that are crossing, then
\[
2 c + \left( \binom{k}{2} - c \right) \leq n .
\]
If $c >  \delta \binom{k}{2}$ for some $\delta > 0$, then we obtain an 
improvement on the bound 
\[
g(n) \leq n + \sqrt{2n } + 1
\]
which 
follows from the inequality $\binom{k}{2} \leq n$.  A construction improving (\ref{eq 1.1}) can therefore, not have 
too many crossing chords.


\end{document}